\newtheorem{thm}{Theorem}[section]
\newtheorem{lem}[thm]{Lemma}
\newtheorem{pro}[thm]{Proposition}
\theoremstyle{definition}
\newtheorem{exa}[thm]{Example}
\newtheorem{rem}[thm]{Remark}
\begin{document}

\begin{center}
{\Large On the strong law of large numbers for $\varphi$-subgaussian random variables}
\end{center}
\begin{center}
{\sc Krzysztof Zajkowski}\\
Institute of Mathematics, University of Bialystok \\ 
Ciolkowskiego 1M, 15-245 Bialystok, Poland \\ 
kryza@math.uwb.edu.pl 
\end{center}

\begin{abstract}
For $p\ge 1$ let $\varphi_p(x)=x^2/2$ if $|x|\le 1$ and $\varphi_p(x)=1/p|x|^p-1/p+1/2$ if $|x|>1$. 
For a random variable $\xi$ let $\tau_{\varphi_p}(\xi)$ denote
$\inf\{a\ge 0:\;\forall_{\lambda\in\mathbb{R}}\; \ln\mathbb{E}\exp(\lambda\xi)\le\varphi_p(a\lambda)\}$; $\tau_{\varphi_p}$ is a norm in a space $Sub_{\varphi_p}=\{\xi:\;\tau_{\varphi_p}(\xi)<\infty\}$ of $\varphi_p$-subgaussian random variables. We prove that if for a sequence $(\xi_n)\subset Sub_{\varphi_p}$ ($p>1$) there exist positive constants $c$ and $\alpha$ such that for every natural number $n$ 
the following inequality $\tau_{\varphi_p}(\sum_{i=1}^n\xi_i)\le cn^{1-\alpha}$ holds then  $n^{-1}\sum_{i=1}^n\xi_i$ converges almost surely to zero  as $n\to\infty$. This result is a generalization of the SLLN for independent subgaussian random variables (Taylor and Hu \cite{TayHu}) to the case of dependent $\varphi_p$-subgaussian random variables.
\end{abstract}

{\it 2010 Mathematics Subject Classification:} 60F15

{\it Key words: 
$\varphi$-subgaussian random variables,  strong law of large numbers}

\section{Introduction}
The classical Kolmogorov strong laws of large numbers are dealt with independent variables.  Investigations of limit theorems for dependent r.v.s are extensive and episodic. The strong law of large numbers for various classes of many type associated random variables one can find for instance in Bulinski and Shashkin \cite[Chap. 4]{BuSh}. Most of them are considered in the spaces of integrable functions. It is also interested  to describe  general conditions under which the SLLN holds in other spaces of random variables than $L_p$-spaces. In this paper we investigate almost sure convergence of the arithmetic mean (but not only) sequences of $\varphi$-subgaussian random variables.

The notion of subgaussian random variables was introduced by Kahane in \cite{Kahane}. A random variable $\xi$ is {\it subgaussian} 
if its moment generating function is majorized by the moment generating function of some centered Gaussian r.v. with variance $\sigma^2$ that is  $\mathbb{E}\exp(\lambda \xi)\leq \mathbb{E}\exp(\lambda g)=\exp(\sigma^2\lambda^2/2)$, where $g\sim\mathcal{N}(0,\sigma^2)$ (see Buldygin and Kozachenko \cite{BK} or \cite[Ch.1]{BulKoz}). In terms of the cumulant generating functions this condition takes a form:
$\ln\mathbb{E}\exp(\lambda \xi)\leq \sigma^2\lambda^2/2$.  

One can generalize the notion of subgaussian r.v.s to  classes of $\varphi$-subgaussian random variables (see \cite[Ch.2]{BulKoz}). A continuous even convex function $\varphi(x)$ $(x\in \mathbb{R}$) is called a {\em $N$-function}, if the following condition hold:\\
(a) $\varphi(0)=0$ and $\varphi(x)$ is monotone increasing for $x>0$,\\
(b) $\lim_{x\to 0}\varphi(x)/x=0$ and $\lim_{x\to \infty}\varphi(x)/x=\infty$.\\
It is called a {\it quadratic $N$-function}, if in addition $\varphi(x)=cx^2$ for all $|x|\le x_0$, with $c>0$ and $x_0>0$. The  quadratic condition is needed to ensure nontriviality for classes of $\varphi$-subgaussian random variables (see \cite[Ch.2, p.67]{BulKoz}).

\begin{exa}
Let for $p\ge 1$
$$
\varphi_p(x)=\left\{
\begin{array}{ccl}
\frac{x^2}{2}, & {\rm if} & |x|\le 1,\\
\frac{1}{p}|x|^p-\frac{1}{p}+\frac{1}{2}, & {\rm if} & |x|>1.
\end{array}
\right.
$$
The function $\varphi_p$ is an example of the quadratic $N$-function which is some standardization of the function $|x|^p$ (see \cite[Lem. 2.5]{Zaj1}).
Let us emphasize that for $p=2$ we have the case of subgaussian random variables.
\end{exa}

Let $\varphi$ be a quadratic $N$-function. A random variable $\xi$ is said to be {\it $\varphi$-subgaussian} if there is a constant $a>0$ such that 
$\ln\mathbb{E}\exp(\lambda \xi)\leq \varphi(a\lambda)$. The {\it $\varphi$-subgaussian standard (norm) $\tau_{\varphi}(\xi)$} is defined as 
$$
\tau_{\varphi}(\xi)=\inf\{a\ge 0:\;\forall_{\lambda\in\mathbb{R}}\; \ln\mathbb{E}\exp(\lambda \xi)\le\varphi(a\lambda)\};
$$
a space $Sub_\varphi=\{\xi:\;\tau_{\varphi}(\xi)<\infty\}$ with the norm $\tau_{\varphi}$ is a Banach space (see \cite[Ch.2, Th.4.1]{BulKoz})

Let $\varphi(x)$ ($x\in\mathbb{R}$) be a real-valued function. The function $\varphi^\ast(y)$ ($y\in\mathbb{R}$) defined by $\varphi^\ast(y)=\sup_{x\in\mathbb{R}}\{xy-\varphi(x)\}$ is called the {\it Young-Fenchel transform}
or the {\it convex conjugate} of $\varphi$ (in general, $\varphi^\ast$ may take value $\infty$). It is known that if $\varphi$ is a quadratic $N$-function then $\varphi^\ast$ is quadratic $N$-function too. For instance, since our $\varphi_p$ is a differentiable (even at $\pm 1$) function one can easy check (see \cite[Lem. 2.6]{Zaj1}) that $\varphi_p^\ast=\varphi_q$ for $p,q>1$, if $1/p+1/q=1$.
\begin{rem}
One can define the space $Sub_{\varphi_p}$ by using the Luxemburg norm of the form
$$
\|\xi\|_{\psi_q}=\inf\big\{K>0:\;\mathbb{E}\exp|\xi/K|^q\le 2\big\}\quad (q=p/(p-1)),
$$
and then $Sub_{\varphi_p}=\{\xi:\;\|\xi\|_{\psi_q}<\infty\;{\rm and}  \;\mathbb{E}\xi=0\}$ (compare \cite[Th. 2.7]{Zaj1}, the space $L_{\psi_q}^0=Sub_{\varphi_p}$). Let us note that 
$\|\mathbb{E}\xi\|_{\psi_q}=\|\xi\|_{\psi_q}$  and we get that if $\|\xi\|_{\psi_q}<\infty$ then $\xi-\mathbb{E}\xi\in Sub_{\varphi_p}$.
\end{rem}
\begin{exa}
The standard normal random variable $g$ belongs to $Sub_{\varphi_2}$ and $\tau_{\varphi_2}(g)=1$, since
$$
\mathbb{E}\exp(tg)=\exp(t^2/2)=\exp(\varphi_2(t)).
$$
Because $g^2$ has $\chi^2_1$-distribution with one degree of freedom whose moment generating function is $\mathbb{E}\exp(tg)=(1-2t)^{-1/2}$ for $t<1/2$ then
$$
\mathbb{E}\exp(g^2/K^2)=(1-2/K^2)^{-1/2},
$$
which is less or equal $2$ if $K\ge \sqrt{8/3}$. It gives that $\|\xi\|_{\psi_2}=\sqrt{8/3}$. Let us observe that $\psi_2$-norm of $|g|$ is equal to $\psi_2$-norm of $g$. It implies that $|g|-\mathbb{E}|g|\in Sub_{\varphi_2}$.
Similarly as above one can show that
$$
\||g|^{2/q}\|_{\psi_q}=\inf\big\{K>0;\;\mathbb{E}\exp(g^2/K^q)\le 2\big\}=(8/3)^{1/q}<\infty.
$$
Thus we get that $|g|^{2/q}-\mathbb{E}|g|^{2/q}\in Sub_{\varphi_p}$, where $1/p+1/q=1$.

\end{exa}

Let us recall that the convex conjugate is order-reversing and possesses some scaling property. If $\varphi_1\ge\varphi_2$ then $\varphi_1^\ast\le\varphi_2^\ast$.
Let for $a>0$ and $b\neq 0$ $\psi(x)=a\varphi(bx)$ then $\psi^\ast(y)=a\varphi^\ast(y/(ab))$ (see e.g.\cite[Ch.X, Prop.1.3.1]{HUiL}).

The convex conjugate of the cumulant generating function can be served to estimate of 'tails' distribution of a centered random variable. Let $\mathbb{E}\xi=0$ and $\psi_\xi$ denote the cumulant generating function of $\xi$, i.e. $\psi_\xi(\lambda)=\ln\mathbb{E}\exp(\lambda\xi)$ then for $\varepsilon>0$
$$
\mathbb{P}(\xi\ge\varepsilon)\le\exp(-\psi_\xi^\ast(\varepsilon)).
$$

Let us observe that for $\xi\in Sub_\varphi$, by the definition of $\tau_{\varphi}(\xi)$, we have the following inequality: 
$\psi_\xi(\lambda)\le\varphi(\tau_{\varphi}(\xi)\lambda)$ and by the order-reversing and the scaling property we get    
$\psi_\xi^\ast(\varepsilon)\ge\varphi^\ast(\varepsilon/\tau_{\varphi}(\xi))$. Now we can obtain some weaker form of the above estimation but with using the general function $\varphi$:
\begin{equation}
\label{estm}
\mathbb{P}(|\xi|\ge\varepsilon)\le 2\exp\Big(-\varphi^\ast\Big(\frac{\varepsilon}{\tau_{\varphi}(\xi)}\Big)\Big);
\end{equation}
see \cite[Ch.2, Lem.4.3]{BulKoz}.  




\section{Results}
First we show that if we have some upper estimate for $\tau_\varphi$ then in  (\ref{estm}) we can substitute this estimate instead of $\tau_{\varphi}$.
\begin{lem} 
\label{estm1}
If $\tau_{\varphi}(\xi)\le C(\xi)$ for every $\xi\in Sub_\varphi$ then
\begin{equation*}
\mathbb{P}(|\xi|\ge\varepsilon)\le 2\exp\Big(-\varphi^\ast\Big(\frac{\varepsilon}{C(\xi)}\Big)\Big).
\end{equation*}
\end{lem}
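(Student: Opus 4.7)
The plan is to derive the conclusion directly from estimate (\ref{estm}) by exploiting the monotonicity of $\varphi^\ast$ on the positive half-line, together with the hypothesis $\tau_{\varphi}(\xi)\le C(\xi)$. No new probabilistic input is needed; the lemma is essentially a monotonicity statement about the right-hand side of (\ref{estm}) as a function of the normalizing constant.

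First I would fix $\varepsilon>0$ and apply (\ref{estm}) to $\xi\in Sub_\varphi$ to obtain
$$
\mathbb{P}(|\xi|\ge\varepsilon)\le 2\exp\Big(-\varphi^\ast\Big(\frac{\varepsilon}{\tau_{\varphi}(\xi)}\Big)\Big).
$$
Assuming for the moment that $C(\xi)>0$ (and hence $\tau_{\varphi}(\xi)\le C(\xi)$ with both sides finite, strictly positive in the nontrivial case), I would note that $\varepsilon/\tau_{\varphi}(\xi)\ge \varepsilon/C(\xi)>0$. Since $\varphi$ is a quadratic $N$-function, so is its Young--Fenchel transform $\varphi^\ast$; in particular $\varphi^\ast$ is even and nondecreasing on $[0,\infty)$. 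Consequently
$$
\varphi^\ast\Big(\frac{\varepsilon}{\tau_{\varphi}(\xi)}\Big)\ge \varphi^\ast\Big(\frac{\varepsilon}{C(\xi)}\Big),
$$
and substituting into the previous display yields the claimed inequality.

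Finally I would dispose of the degenerate case $C(\xi)=0$: then $\tau_{\varphi}(\xi)=0$, which forces $\psi_\xi(\lambda)\le\varphi(0)=0$ for all $\lambda$, hence $\xi=0$ almost surely and $\mathbb{P}(|\xi|\ge\varepsilon)=0$; interpreting $\varphi^\ast(\varepsilon/0)$ as $+\infty$ makes the stated bound vacuously true. There is no genuine obstacle here -- the only point requiring care is making explicit the monotonicity of $\varphi^\ast$ on $[0,\infty)$, which follows from the fact that it is even and convex with $\varphi^\ast(0)=0$.
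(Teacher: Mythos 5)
Your proposal is correct and follows essentially the same route as the paper: both reduce the lemma to the tail bound (\ref{estm}) plus a monotonicity argument showing $\varphi^\ast(\varepsilon/\tau_{\varphi}(\xi))\ge\varphi^\ast(\varepsilon/C(\xi))$. The only (immaterial) difference is that you invoke the monotonicity of $\varphi^\ast$ on $[0,\infty)$ directly, whereas the paper derives the same inequality from the pointwise bound $\varphi(\tau_{\varphi}(\xi)x)\le\varphi(C(\xi)x)$ via the order-reversing and scaling properties of the Young--Fenchel transform; your treatment of the degenerate case $C(\xi)=0$ is a harmless extra.
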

\begin{proof}
Since $\varphi$ is even and increasing monotonic for $x>0$, we get
$$
\varphi(\tau_{\varphi}(\xi)x)=\varphi(\tau_{\varphi}(\xi)|x|)\le\varphi(C(\xi)|x|)
=\varphi(C(\xi)x).
$$
And again by the order-reversing and the scaling property we obtain
$$
\varphi^\ast\Big(\frac{y}{\tau_{\varphi}(\xi)}\Big)\ge \varphi^\ast\Big(\frac{y}{C(\xi)}\Big),
$$
which combined with (\ref{estm}) establishes the inequality.
\end{proof}
With these preliminaries accounted for, we can  prove the main result of the paper.
\begin{thm}
\label{mthm}
Let  $(\xi_n)\subset Sub_{\varphi_p}$ for some  $p>1$. If there exist positive constants $c$ and $\alpha$ such that for every natural number $n$ 
the following condition $\tau_{\varphi_p}(\sum_{i=1}^n\xi_i)\le cn^{1-\alpha}$ holds then the term $n^{-1}\sum_{i=1}^n\xi_i$ converges almost surely to zero  as $n\to\infty$ .
\end{thm}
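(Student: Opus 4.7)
The plan is to combine the tail estimate provided by Lemma \ref{estm1} with a direct Borel--Cantelli argument. Writing $S_n=\sum_{i=1}^n\xi_i$, the hypothesis gives $\tau_{\varphi_p}(S_n)\le cn^{1-\alpha}$, so Lemma \ref{estm1} (applied to $S_n$ with $C(S_n)=cn^{1-\alpha}$, and with $\varepsilon$ replaced by $\varepsilon n$) yields, for any $\varepsilon>0$,
\[
\mathbb{P}\!\left(\left|\frac{S_n}{n}\right|\ge\varepsilon\right)
\le 2\exp\!\left(-\varphi_p^{\ast}\!\left(\frac{\varepsilon n^{\alpha}}{c}\right)\right).
\]

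Next, I would use the explicit computation recalled in the introduction: for $p>1$ one has $\varphi_p^{\ast}=\varphi_q$ with $1/p+1/q=1$, and $q>1$. For every fixed $\varepsilon>0$ there exists $N=N(\varepsilon)$ such that $\varepsilon n^{\alpha}/c>1$ whenever $n\ge N$; for such $n$ the definition of $\varphi_q$ gives
\[
\varphi_q\!\left(\frac{\varepsilon n^{\alpha}}{c}\right)
=\frac{1}{q}\!\left(\frac{\varepsilon n^{\alpha}}{c}\right)^{\!q}-\frac{1}{q}+\frac{1}{2}
\ge K\,n^{\alpha q}
\]
for some constant $K=K(\varepsilon,c,q,\alpha)>0$. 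Since $\alpha q>0$, the series $\sum_{n\ge 1}\exp(-Kn^{\alpha q})$ converges, and hence $\sum_{n\ge 1}\mathbb{P}(|S_n/n|\ge\varepsilon)<\infty$.

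The Borel--Cantelli lemma then gives $\mathbb{P}(|S_n/n|\ge\varepsilon\text{ i.o.})=0$ for each $\varepsilon>0$. Choosing $\varepsilon$ along a sequence tending to $0$ (e.g.\ $\varepsilon=1/k$, $k\in\mathbb{N}$) and intersecting the corresponding full-measure events yields $S_n/n\to 0$ almost surely.

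The argument is essentially routine once Lemma \ref{estm1} and the identity $\varphi_p^{\ast}=\varphi_q$ are in hand; the only point that requires a moment's care is checking that the $\varphi_q$-bound gives a \emph{stretched-exponential} tail $\exp(-Kn^{\alpha q})$ (rather than merely a power of $n$), which is what guarantees summability regardless of how small $\alpha$ is. This is also where the assumption $p>1$ is used in an essential way: it forces $q<\infty$ so that $\varphi_q$ grows polynomially of positive order on $[1,\infty)$, rather than only linearly as would be the case in the limiting regime $p=1$, $q=\infty$.
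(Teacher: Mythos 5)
Your proposal is correct and follows essentially the same route as the paper: apply Lemma \ref{estm1} with $C=cn^{1-\alpha}$, use $\varphi_p^{\ast}=\varphi_q$ to get the stretched-exponential tail $\exp(-Kn^{\alpha q})$, and conclude by summability and Borel--Cantelli. The only cosmetic difference is that you make explicit the final intersection over $\varepsilon=1/k$, which the paper leaves implicit in its appeal to complete convergence.
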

\begin{proof}
Since $\varphi_p^\ast=\varphi_q$, by Lemma \ref{estm1} and  the condition of the theorem we have 
$$
\mathbb{P}\Big(\Big|\sum_{i=1}^n\xi_i\Big|\ge n\varepsilon\Big)
\le 2\exp\Big(-\varphi_q\Big(\frac{n^\alpha\varepsilon}{c}\Big)\Big).
$$
For sufficiently large $n$ ($n>(c/\varepsilon)^{1/\alpha}$) we have $n^\alpha\varepsilon/c>1$ and, in consequence,  
$$
\varphi_q\Big(\frac{n^\alpha\varepsilon}{c}\Big)=n^{q\alpha}\frac{1}{q}\Big(\frac{\varepsilon}{c}\Big)^q-\frac{1}{q}+\frac{1}{2}. 
$$
Thus we get the following estimate 
$$
\mathbb{P}\Big(\Big|\sum_{i=1}^n\xi_i\Big|\ge n\varepsilon\Big)
\le 2\exp\Big(\frac{1}{q}-\frac{1}{2}\Big)\exp\Big(-n^{q\alpha}\frac{1}{q}\Big(\frac{\varepsilon}{c}\Big)^q\Big) 
$$
for every $\varepsilon$ and $n>(c/\varepsilon)^{1/\alpha}$.
Thus, by the integral test, we obtain convergence of the series $\sum_{n=1}^\infty\mathbb{P}(|\sum_{i=1}^n\xi_i|\ge n\varepsilon)$.
It follows the completely and, in consequence,  almost sure convergence  of $n^{-1}\sum_{i=1}^n\xi_i$ to zero.

\end{proof}
\begin{rem}
Let us emphasize that the above theorem is a generalization of the Theorem (SLLN) (Taylor and Hu \cite[sec.3, p.297]{TayHu}) to the case of $\varphi_p$-subgaussian random variables, not only subgaussian ones. Moreover we do not assume their independence. For this reason we used a 
modified condition for a behavior of the norm $\tau_p$ than Taylor and Hu, which I describe below.
\end{rem}

Since $\tau_\varphi$ is a norm, we obtain 
$$
\tau_{\varphi}\Big(\sum_{i=1}^n\xi_i\Big)\le \sum_{i=1}^n\tau_{\varphi}(\xi_i).
$$
If for instance $\xi_i$, $i=1,...,n$, are copies of the same variable $\xi$ then in the above the equality holds and $\tau_{\varphi}\Big(\sum_{i=1}^n\xi_i\Big)=n\tau_\varphi(\xi)$. Let us observe that in this case the assumption of Theorem \ref{mthm} is not satisfied. Additionally informations about form of dependence (or independence) sometime allow us to improve this estimate. And so, for an independence sequence 
$(\xi_n)$ if there is some $r\in(0,2]$ such that $\varphi(|x|^{1/r})$ is convex then 
\begin{equation}
\label{enorm}
\tau_{\varphi}\Big(\sum_{i=1}^n\xi_i\Big)^r\le \sum_{i=1}^n\tau_{\varphi}(\xi_i)^r;
\end{equation}
see \cite[Sec.2, Th.5.2]{BulKoz}. If $r$ is bigger  then the estimate is better. For the function $\varphi_p$ we can always take $r=\min\{p,2\}$.
In Taylor's and Hu's SLLN  variables $\xi_n$ were subgaussian and independent and it was taken $p=2$.  Let us emphasize that in this case if in addition
$\xi_1,...,\xi_n$ have the same distribution as $\xi$ then $\tau_{\varphi}(\sum_{i=1}^n\xi_i) \le \sqrt{n}\tau_\varphi(\xi)$ and the condition of Theorem \ref{mthm} is satisfied ($c=\tau_\varphi(\xi)$ and $\alpha=1/2$). 

Let us emphasize that another assumptions on dependence of $\xi_1,...,\xi_n$ can give the same estimate of the norm of  $\tau_{\varphi}(\sum_{i=1}^n\xi_i)$.
In the paper Giuliano Antonini et al.\cite[Lem.3]{Rita} it was proved that for $\varphi$-subgaussian  acceptable random variables the inequality (\ref{enorm}) holds,
if $\varphi(|x|^{1/r})$ is convex. The definition of acceptability of sequence of random variable one can find  therein. For us it is the most important that these estimates are the same.  
In this article there is some version of the Marcinkiewicz-Zygmund law of large numbers for $\varphi$-subgaussian random variables as a corollary of much more general theorem. We give an independent proof of this corollary but under modified assumptions.
\begin{pro}
\label{MZ}
For $p>1$ let $(\xi_n)$ be a bounded sequence of $\varphi_p$-subgaussian random variables and let $r=\min\{p,2\}$. If in addition
\begin{equation}
\label{estm2}
\tau_{\varphi_p}\Big(\sum_{i=1}^n\xi_i\Big)^r\le \sum_{i=1}^n\tau_{\varphi_p}(\xi_i)^r
\end{equation}
then
$
n^{-1/s}\sum_{i=1}^n\xi_i \rightarrow 0 
$
almost surely for any $0<s<r$.
\end{pro}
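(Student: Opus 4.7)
The plan is to mimic the argument of Theorem \ref{mthm} verbatim, with the linear threshold $n\varepsilon$ replaced by $n^{1/s}\varepsilon$ and with the sublinear rate $n^{1-\alpha}$ replaced by the rate $n^{1/r}$ that the current hypotheses will yield.

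First, boundedness gives a constant $M>0$ such that $\tau_{\varphi_p}(\xi_n)\le M$ for every $n$. Plugging this into (\ref{estm2}) and extracting $r$-th roots produces
\[
\tau_{\varphi_p}\Big(\sum_{i=1}^n\xi_i\Big)\le M n^{1/r}.
\]
Writing $S_n=\sum_{i=1}^n\xi_i$ and applying Lemma \ref{estm1} with threshold $n^{1/s}\varepsilon$ and upper estimate $C(S_n)=Mn^{1/r}$, and using $\varphi_{p}^{\ast}=\varphi_q$ with $1/p+1/q=1$, I would obtain
\[
\mathbb{P}\big(|S_n|\ge n^{1/s}\varepsilon\big)\le 2\exp\Big(-\varphi_q\Big(\frac{\varepsilon\, n^{1/s-1/r}}{M}\Big)\Big).
\]

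Since $s<r$, the exponent $\beta:=1/s-1/r$ is strictly positive, so for all sufficiently large $n$ the argument of $\varphi_q$ exceeds $1$; substituting the explicit formula $\varphi_q(t)=t^q/q-1/q+1/2$ for $|t|>1$ gives, for such $n$,
\[
\mathbb{P}\big(|S_n|\ge n^{1/s}\varepsilon\big)\le 2\,e^{1/q-1/2}\exp\Big(-\frac{\varepsilon^{q}}{q\,M^{q}}\,n^{q\beta}\Big).
\]
Because $q\beta>0$, the right-hand side decays faster than any power of $n$, so the series $\sum_{n\ge 1}\mathbb{P}(n^{-1/s}|S_n|\ge\varepsilon)$ converges for every $\varepsilon>0$. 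This is complete convergence, which by Borel-Cantelli entails the claimed almost sure convergence $n^{-1/s}S_n\to 0$.

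There is essentially no obstacle here; the proof is a direct rescaling of the argument given for Theorem \ref{mthm}. The only two facts to verify are that boundedness combined with (\ref{estm2}) produces the uniform growth rate $n^{1/r}$ for $\tau_{\varphi_p}(S_n)$, and that the hypothesis $s<r$ makes the effective exponent $\beta=1/s-1/r$ positive so that $\varphi_q$ can be evaluated via its polynomial branch; both are immediate from the definitions.
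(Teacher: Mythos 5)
Your proof is correct and follows essentially the same route as the paper: bound $\tau_{\varphi_p}(S_n)$ by $Mn^{1/r}$ using boundedness of the norms and (\ref{estm2}), apply Lemma \ref{estm1} at level $n^{1/s}\varepsilon$, evaluate $\varphi_q$ on its polynomial branch for large $n$ since $1/s-1/r>0$, and conclude by convergence of the series and Borel--Cantelli. No gaps.
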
 
\begin{rem}
Since $\varphi_p(|x|^{1/r})$ is convex, the estimate (\ref{estm2}) is satisfied by sequences of independent or acceptable random variables, for instance.
\end{rem}
\begin{proof}
Let $b=\sup_{n\ge 1}\tau_{\varphi_p}(\xi_n)$ then $\sum_{i=1}^n\tau_{\varphi_p}(\xi_i)^r\le nb^r$ and, in consequence,   
$\tau_{\varphi_p}\Big(\sum_{i=1}^n\xi_i\Big)\le n^{1/r}b$. For positive number $s$ less than $r$,  by Lemma \ref{estm1}, we obtain
$$
\mathbb{P}\Big(\Big|\sum_{i=1}^n\xi_i\Big|\ge n^\frac{1}{s}\varepsilon)\le 2\exp\Big(-\varphi_q\Big(\frac{n^{1/s}\varepsilon}{n^{1/r}b}\Big)\Big)
=2\exp\Big(-\varphi_q\Big(n^{(\frac{1}{s}-\frac{1}{r})}\frac{\varepsilon}{b}\Big)\Big).
$$
For $n>(b/\varepsilon)^{(1/s-1/r)^{-1}}$ we have 
$$
\varphi_q\Big(n^{(\frac{1}{s}-\frac{1}{r})}\frac{\varepsilon}{b}\Big)=n^{q(\frac{1}{s}-\frac{1}{r})}\frac{1}{q}\Big(\frac{\varepsilon}{b}\Big)^q-\frac{1}{q}+\frac{1}{2}
$$
and, in consequence, we get
$$
\sum_{n=1}^\infty\exp\Big(-\varphi_q\Big(n^{(\frac{1}{s}-\frac{1}{r})}\frac{\varepsilon}{b}\Big)\Big)<\infty,
$$
which, in view of Borel-Cantelli lemma, completes the proof.
\end{proof}
\begin{rem}
Because we apply the function $\varphi_p(x)$ instead of $|x|^p$ then we must not restrict $p$ to be less or equal $2$ to ensure the fulfillment of the quadratic condition for the  function $|x|^p$. Moreover we use the metric property (\ref{estm2}) instead of assumptions on some form of dependence random variables (compare \cite[Cor. 7]{Rita}).  
\end{rem}

\begin{exa}
The proof of Hoeffding-Azuma's inequality for a sequence $(\xi_n)$ of bounded random variables such that $|\xi_n|\le d_n$ a.s. and $\mathbb{E}\xi_n=0$ is based on an estimate of the moment generating function of the partial sum $\sum_{i=1}^n\xi_i$. Under assumptions  that $\xi_n$ are independent (Hoeffding) or $\xi_n$ are martingales increments (Azuma) the following inequality holds 
\begin{equation}
\label{HA}
\mathbb{E}\exp\Big(\lambda\sum_{i=1}^n\xi_i\Big)\le \exp\Big(\frac{\lambda^2\sum_{i=1}^nd_i^2}{2}\Big);
\end{equation}
see Hoeffding \cite{Hoeff} and Azuma \cite{Azuma}. Let us emphasize that in \cite{Azuma} Azuma  has proved the above estimate under more general assumptions on $(\xi_n)$ which  satisfy centered bounded martingales increments. The inequality (\ref{HA}) means that 
$$
\tau_{\varphi_2}\Big(\sum_{i=1}^n\xi_i\Big)\le \Big(\sum_{i=1}^nd_i^2\Big)^{1/2}.
$$
If we take $d_n=1$ for $n=1,2,...$ then we get the following condition
$$
\tau_{\varphi_2}\Big(\sum_{i=1}^n\xi_i\Big)\le \sqrt{n},
$$
which follows that the sequence $(\xi_n)$ satisfies the assumptions of Proposition \ref{MZ} with $p=r=2$ and the norm $\tau_{\varphi_2}(\xi_n)\le 1$ and we get
the almost sure convergence 
$
n^{-1/s}\sum_{i=1}^n\xi_i 
$
to $0$
for any $0<s<2$. Let us note that for $s=1$ we obtain SLLN for this sequence.
\end{exa}

\end{document}